\documentclass[12pt, reqno]{amsart}
\usepackage{amsmath, amsthm, amscd, amsfonts, amssymb, mathtools, color, hyperref}
\usepackage[dvipsnames]{xcolor}

\textheight 22.5 truecm \textwidth 14.5 truecm
\setlength{\oddsidemargin}{0.35in}\setlength{\evensidemargin}{0.35in}

\setlength{\topmargin}{-.5cm}

\newtheorem{theorem}{Theorem}[section]
\newtheorem{lemma}[theorem]{Lemma}
\newtheorem{proposition}[theorem]{Proposition}

\theoremstyle{definition}

\newtheorem{example}[theorem]{Example}

\theoremstyle{remark}
\newtheorem{remark}[theorem]{Remark}
\numberwithin{equation}{section}
\newcommand*{\bigchi}{\mbox{\Large$\chi$}}

\begin{document}
	\setcounter{page}{1}
	
	\title[Hoffman-Wielandt inequality]
	{The Hoffman-Wielandt inequality for quaternion matrices and quaternion matrix 
		polynomials}
	
	\author[Pallavi]{Pallavi Basavaraju}
	\address{Pallavi Basavaraju\\
		 Department of Mathematics\\
		Dr. G. Shankar Government Women's First Grade College and P.G. Study Centre\\
		Ajjarakadu, Udupi, Karnataka -- 576101, Karnataka, India}
	\email{pallavipoorna20@iisertvm.ac.in, pallavipoorna6@gmail.com}

	\author[Shrinath]{Shrinath Hadimani}
	\address{Shrinath Hadimani\\
		 Department of Mathematics\\
		Manipal Institute of Technology, Manipal 
		Academy of Higher Education\\
		Manipal -- 576104, Karnataka, India}
	\email{srinathsh3320@iisertvm.ac.in, shrinath.hadimani@manipal.edu}

	\author[Sachindranath]{Sachindranath Jayaraman}
	\address{Sachindranath Jayaraman\\
		School of Mathematics\\ 
		Indian Institute of Science Education and Research Thiruvananthapuram\\ 
		Maruthamala P.O., Vithura, Thiruvananthapuram -- 695 551, Kerala, India.}
	\email{sachindranathj@iisertvm.ac.in, 
		sachindranathj@gmail.com}

	\subjclass[2010]{15B33, 15A42, 15A18, 15A20, 12E15, 15A66.}
	
	\keywords{Quaternion matrices; standard eigenvalues of quaternion matrices; 
		quaternion matrix polynomials; diagonalizability of block companion matrix; 
		Hoffman-Wielandt inequality.}
	
	\begin{abstract}
		The purpose of this paper is to derive the Hoffman-Wielandt inequality and its 
		generalization for quaternion matrices. Diagonalizability of the block companion matrix 
		of certain quadratic (linear) quaternion matrix polynomials is brought out. As a 
		consequence, we prove that if $Q(\lambda)$ is another quadratic (linear) quaternion 
		matrix polynomial, then under certain conditions on the coefficients, a generalization of the 
		Hoffman-Wielandt inequality for their corresponding block companion matrices holds. We also prove 
		that if $P(\lambda)$ is a quaternion matrix polynomial with unitary coefficients, then any right 
		eigenvalue $\lambda_0$ of $P(\lambda)$ lies in the annular region $\frac{1}{2} < |\lambda_0| < 2$.
	\end{abstract}
	
	\maketitle
	
	
	
	\section{Introduction}\label{sec-1}
	
	Quaternion matrices and quaternion matrix polynomials have been of considerable interest 
	to researchers in the last few years. Noncommutativity of quaternion multiplication makes 
	analysis over the quaternions quite intriguing. A good reference for quaternion matrices 
	is the survey article by Zhang \cite{Zhang} and the references cited therein. On the other 
	hand, literature on quaternion matrix polynomials is quite limited. Recently in 
	\cite{Ahmad-Ali} and \cite{Ali}, the authors discuss interesting techniques to derive 
	eigenvalue bounds of quaternion matrix polynomials. 
	
	Perturbation analysis of matrices over complex numbers and their eigenvalues is an old 
	problem but less studied for quaternion matrices. One of the well known inequalities on 
	this is the Hoffman-Wielandt inequality (see Theorem \ref{Thm-H-W inequality} below). 
	The reader may refer to Theorem $6.3.5$ of \cite{Horn-Johnson} 
	for a proof. The book by Bhatia \cite{Bhatia} gives a detailed account of the spectral variation 
	problem. Possible generalizations of the Hoffman-Wielandt inequality for complex matrices exist in the literature (see for instance Theorems $2, 3, 5, 6, 7, \ \& \ 8$ of \cite{Ikramov-Nesterenko} and 
	Theorem $2$ of \cite{Sun} and Remark $3.3(2)$ of \cite{Sun-2}). Among these generalizations of 
	Theorem \ref{Thm-H-W inequality}, obtained by relaxing  normality of one or both the matrices in 
	the above theorem, we focus our attention to the one given in \cite{Sun, Sun-2}. We state this 
	below as Theorem \ref{Thm-H-W type inequality}. This was also recently studied in \cite{Pallavi-Hadimani-Jayaraman} in the context of complex matrix polynomials. 
	In \cite{Ahmad-Ali-Ivan}, the authors give a Bauer-Fike type theorem for the right eigenvalues 
	of quaternion matrices and also discuss perturbations via a block-diagonal decomposition and 
	Jordan canonical form of quaternion matrices. In a recent work, we have also obtained location and 
	perturbation results for coneigenvalues of quaternion matrices \cite{Pallavi-Hadimani-Jayaraman-2}. 
	The purpose of this paper is to derive the Hoffman-Wielandt inequality and its generalization for 
	quaternion matrices involving their right eigenvalues. We study these in the context of quaternion 
	matrix polynomials.
	
	The notations $||\cdot||_2$ and $||\cdot||_F$ denote respectively the 
	spectral norm and the Frobenius norm of complex matrices.
	
	\begin{theorem}\label{Thm-H-W inequality}
		Let $A$ and $B$ be two $n \times n$ 
		normal matrices with $\lambda_1, \ldots, \lambda_n$ and 
		$\mu_1, \ldots, \mu_n$ as their  eigenvalues respectively given in some order. Then there exists a permutation 
		$\pi$ of $\{1, \ldots, n\}$ such that 
		\begin{center}
			$\displaystyle \sum_{i=1}^{n} |\lambda_i - \mu_{\pi(i)}|^2 \leq ||A-B||^2_F $.
		\end{center}
	\end{theorem}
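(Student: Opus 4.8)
The plan is to combine the spectral theorem for normal matrices with the unitary invariance of the Frobenius norm, and then to reduce the claim to a statement about doubly stochastic matrices. Since $A$ and $B$ are normal, I would first write $A = U\Lambda U^*$ and $B = VMV^*$ with $U,V$ unitary, where $\Lambda = \mathrm{diag}(\lambda_1,\ldots,\lambda_n)$ and $M = \mathrm{diag}(\mu_1,\ldots,\mu_n)$. Setting $W = U^*V$, which is again unitary, and using that $\|\cdot\|_F$ is unchanged under left or right multiplication by a unitary matrix, I would rewrite
\[
\|A-B\|_F^2 = \|\Lambda - WMW^*\|_F^2 = \|\Lambda W - WM\|_F^2 ,
\]
the second equality coming from right-multiplying the difference by $W$ and using $W^*W = I$.

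Next I would evaluate the last quantity entrywise. Writing $W = (w_{ij})$, the $(i,j)$ entry of $\Lambda W - WM$ equals $(\lambda_i - \mu_j)w_{ij}$, and therefore
\[
\|A-B\|_F^2 = \sum_{i,j=1}^{n} |\lambda_i - \mu_j|^2\,|w_{ij}|^2 .
\]
The crucial observation is that the matrix $S = (|w_{ij}|^2)$ is doubly stochastic: its entries are nonnegative, and because the rows and columns of the unitary matrix $W$ are unit vectors, every row sum and every column sum of $S$ equals $1$.

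Finally, I would view the right-hand side as a linear functional $S \mapsto \sum_{i,j} d_{ij}s_{ij}$ in $S$, where $d_{ij} = |\lambda_i-\mu_j|^2$, evaluated at a point of the Birkhoff polytope of doubly stochastic matrices. By Birkhoff's theorem this polytope is the convex hull of the permutation matrices, and a linear functional attains its minimum over a polytope at an extreme point; hence
\[
\min_{\pi} \sum_{i=1}^{n} |\lambda_i - \mu_{\pi(i)}|^2
= \min_{S} \sum_{i,j} d_{ij} s_{ij}
\le \sum_{i,j} d_{ij}\,|w_{ij}|^2
= \|A-B\|_F^2 ,
\]
and the minimizing permutation $\pi$ is the one required. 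I expect the main obstacle to lie not in any single computation but in recognizing and justifying the doubly stochastic structure of $(|w_{ij}|^2)$ and then invoking the Birkhoff decomposition: once the problem is recast as minimizing a linear functional over the Birkhoff polytope, the conclusion follows immediately from the extreme-point characterization of permutation matrices.
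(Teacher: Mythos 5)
Your proof is correct and is essentially the classical argument: the paper itself gives no proof of this statement, quoting it as Theorem 6.3.5 of Horn and Johnson, and the proof given there is exactly your route via the spectral theorem, unitary invariance of the Frobenius norm, the doubly stochastic matrix $(|w_{ij}|^2)$, and Birkhoff's theorem. All steps check out, so there is nothing to add.
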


	\begin{theorem}\label{Thm-H-W type inequality}
		Let $A$ be a diagonalizable matrix of order $n$ and $B$ be a normal matrix of order $n$, 
		with eigenvalues $\lambda_1, \lambda_2, \ldots, \lambda_n$ and $\mu_1, \mu_2, \ldots, 
		\mu_n$, respectively. Let $X$ be a nonsingular matrix whose columns are eigenvectors of 
		$A$. Then, there exists a permutation $\pi$ of $\{1, \ldots, n\}$ such that 
		\begin{center}
			$\displaystyle \sum_{i=1}^{n} |\lambda_i - \mu_{\pi(i)}|^2 \leq 
			||X||^2_2 ||X^{-1}||^2_2 ||A-B||^2_F$.
		\end{center}
	\end{theorem}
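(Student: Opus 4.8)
The plan is to reduce the general diagonalizable-versus-arbitrary situation to the classical Hoffman–Wielandt setting by peeling off the similarity transformation $X$ and then invoking a Schur reduction together with Birkhoff's theorem. First I would use diagonalizability of $A$ to write $A = X D X^{-1}$, where $D = \operatorname{diag}(\alpha_1, \ldots, \alpha_n)$ and the columns of $X$ are the promised eigenvectors. Conjugating the difference by $X$ gives $X^{-1}(A-B)X = D - C$, where $C := X^{-1} B X$ is similar to $B$ and hence carries exactly the eigenvalues $\beta_1, \ldots, \beta_n$. The submultiplicativity relations $\|PQ\|_F \le \|P\|_2 \|Q\|_F$ and $\|QR\|_F \le \|Q\|_F \|R\|_2$ then yield $\|D - C\|_F \le \|X^{-1}\|_2 \|X\|_2 \|A - B\|_F$, so that it suffices to produce a permutation $\pi$ with $\sum_i |\alpha_i - \beta_{\pi(i)}|^2 \le \|D - C\|_F^2$. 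This isolates the real content: a Hoffman–Wielandt inequality in which one matrix ($D$) is normal (indeed diagonal) but the other ($C$) is completely arbitrary.

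For this reduced inequality I would pass to a Schur triangularization $C = U T U^*$ with $U$ unitary and $T$ upper triangular whose diagonal entries are $\beta_1, \ldots, \beta_n$. Since the Frobenius norm is unitarily invariant, $\|D - C\|_F^2 = \|U^* D U - T\|_F^2$, and discarding the nonnegative off-diagonal contributions leaves the lower bound $\|U^* D U - T\|_F^2 \ge \sum_{i=1}^n |(U^* D U)_{ii} - \beta_i|^2$. A direct computation shows $(U^* D U)_{ii} = \sum_k |U_{ki}|^2 \alpha_k$, i.e.\ each diagonal entry is a convex combination of the $\alpha_k$ with weights $S_{ik} := |U_{ki}|^2$; crucially, because $U$ is unitary the matrix $S = (S_{ik})$ is doubly stochastic.

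The final step is the optimization. The function $S \mapsto \sum_i |\sum_k S_{ik}\alpha_k - \beta_i|^2$ is convex on the Birkhoff polytope of doubly stochastic matrices, being the composition of the convex map $z \mapsto |z|^2$ with an affine map, so its minimum is attained at a vertex of that polytope. By Birkhoff's theorem the vertices are exactly the permutation matrices, at which the sum collapses to $\sum_i |\alpha_{\sigma(i)} - \beta_i|^2$ for some permutation $\sigma$. Hence $\sum_i |(U^* D U)_{ii} - \beta_i|^2 \ge \min_\sigma \sum_i |\alpha_{\sigma(i)} - \beta_i|^2$, and after relabeling $\pi = \sigma^{-1}$ and chaining the inequalities one obtains $\sum_i |\alpha_i - \beta_{\pi(i)}|^2 \le \|X\|_2^2 \|X^{-1}\|_2^2 \|A-B\|_F^2$.

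I expect the main obstacle to be the handling of the non-normal matrix $C$: unlike the classical proof, where both matrices diagonalize unitarily and the doubly stochastic structure appears symmetrically, here I can only exploit the diagonal of the Schur form of $C$. The two ideas that make this work --- throwing away the strictly upper-triangular part of $T$ (legitimate because it only decreases the Frobenius norm) and recognizing the diagonal of $U^* D U$ as a doubly stochastic image of the spectrum of $A$ --- are the crux, and the rest is the standard convexity/Birkhoff endgame.
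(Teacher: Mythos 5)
The decisive step of your argument is false. A convex function on a compact convex set attains its \emph{maximum} at an extreme point; its \emph{minimum} may lie anywhere, including the interior. So from convexity of $f(S)=\sum_i\bigl|\sum_k S_{ik}\alpha_k-\beta_i\bigr|^2$ and Birkhoff's theorem you may only conclude $f(S)\le\max_\sigma f(P_\sigma)$, which points the wrong way; the inequality $f(S)\ge\min_\sigma f(P_\sigma)$ that you need is simply not true. Concretely, take $\alpha_1=1$, $\alpha_2=-1$, $\beta_1=\beta_2=0$ and let $S$ be the doubly stochastic matrix with all entries $\tfrac12$, which arises from the unitary $U=\tfrac{1}{\sqrt2}\left(\begin{smallmatrix}1&1\\1&-1\end{smallmatrix}\right)$: then $f(S)=0$ while $\min_\sigma f(P_\sigma)=2$. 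The classical Hoffman--Wielandt proof survives this step only because there the relevant function of the doubly stochastic matrix is \emph{linear} --- for $A,B$ both normal one gets $\|A-B\|_F^2=\sum_{i,j}|v_{ij}|^2\,|\alpha_i-\beta_j|^2$ with $(|v_{ij}|^2)$ doubly stochastic, and a linear function does attain its minimum at a vertex. Your reduction destroys exactly this linearity, because the non-normal matrix $C$ admits only a Schur triangularization rather than a unitary diagonalization.

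Worse, the intermediate statement you reduce to --- for $D$ diagonal and $C$ arbitrary with eigenvalues $\beta_1,\dots,\beta_n$, some permutation satisfies $\sum_i|\alpha_i-\beta_{\pi(i)}|^2\le\|D-C\|_F^2$ --- is itself false, so no repair of the final step can rescue the plan. Take $D=\mathrm{diag}(1,-1)$ and $C=\tfrac12\left(\begin{smallmatrix}1&-1\\1&-1\end{smallmatrix}\right)$, which is nilpotent, so $\beta_1=\beta_2=0$; then $\|D-C\|_F^2=1$, yet every permutation gives $\sum_i|\alpha_i-\beta_{\pi(i)}|^2=2$. Note that in this example $D$ is already diagonal, so one may take $X=I$ and $\|X\|_2\|X^{-1}\|_2=1$: the same pair therefore violates Theorem \ref{Thm-H-W type inequality} exactly as quoted. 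The statement requires an additional hypothesis on $B$ (the standard correct generalizations, e.g.\ in Stewart and Sun's \emph{Matrix Perturbation Theory}, assume the second matrix is normal, which is what restores the linear-in-doubly-stochastic structure after unitarily diagonalizing $B$). The paper offers no proof of this theorem to compare against --- it is imported from Ikramov and Nesterenko --- but you should be aware that, in the generality stated, it is not provable, and your argument founders precisely at the point where the missing normality hypothesis would have been used.
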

	
	An example to illustrate the importance of normality of one of the matrices 
	in Theorem \ref{Thm-H-W type inequality} can be easily provided.

	This paper is organized as follows. Section \ref{sec-2} contains notations, 
	preliminaries and necessary results from \cite{Zhang} and \cite{Ahmad-Ali}. The 
	Hoffman-Wielandt inequality and its generalization for quaternion matrices 
	are proved in Section \ref{sec-3.1}. In Section \ref{sec-3.2}, we discuss these for block companion 
	matrices of quaternion matrix polynomials. Diagonalizability of the block companion matrix 
	of certain quadratic (linear) quaternion matrix polynomials is brought out. We also prove that if 
	$P(\lambda)$ is a quaternion matrix polynomial with unitary coefficients, then any right eigenvalue $\lambda_0$ of $P(\lambda)$ lies in the annular region $\frac{1}{2} < |\lambda_0| < 2$.

	\section{Notations and preliminaries}\label{sec-2}
	
	Throughout this paper, we use the following notation and terminology. The fields 
	of real and complex numbers are denoted by $\mathbb{R}$ and $\mathbb{C}$ 
	respectively. $\mathbb{C}^+$ denotes the closed upper half plane of the complex plane. The 
	set $\mathbb{H}:= \{a_0+ a_1 i + a_2 j+ a_3 k \ | \ a_i \in 
	\mathbb{R}\}$ with $i^2= j^2=k^2=ijk=-1$, denotes the 
	set of real quaternions. The conjugate and modulus of an element $q \in \mathbb{H}$ are 
	denoted and defined as $\bar{q}:= a_0- a_1 i -a_2 j- a_3 k$, 
	$|q|:=\sqrt{a_0^2 +a_1^2+a_2^2+a_3^2}$ respectively. Define $Re(q)= a_0$ as the real part 
	of $q$, $Co(q)=a_0+a_1 i$, the complex part of $q$ and 
	$Im(q)=a_1 i+a_2j+a_3k$, the imaginary part of $q$. Two 
	quaternions $p$ and $q$ are similar (written as $p \sim q$) if there exists a nonzero 
	quaternion $r$ such that $r^{-1}qr =p$. If $p \sim q$, then $|p|=|q|$. This similarity is 
	an equivalence relation on $\mathbb{H}$. 
	
	Let $M_n\mathbb(X)$ denote the set of all 
	$n \times n$ matrices whose entries are from $X$, where $X$ is either $\mathbb{R}$, 
	$\mathbb{C}$ or $\mathbb{H}$. Given $A = (a_{ij}) \in M_n(\mathbb{H})$, the conjugate 
	transpose of $A$ denoted by $A^*$ is defined as $A^* = (\bar{a}_{ji})$. 
	$A \in M_n(\mathbb{H})$ is said to be normal if $AA^* = A^*A$, unitary if $AA^*=I=A^*A$, 
	Hermitian if $A^*=A$; and invertible if $AB=BA=I$ for some $B \in M_n(\mathbb{H})$. 
	An $n\times n$ quaternion matrix $A$ is said to be positive (semi)definite if $A$ is 
	Hermitian and $x^*Ax > (\geq) 0$ for all nonzero vector $x \in \mathbb{H}^n$. 
	Since the set of quaternions is a noncommutative division ring, there exists a 
	notion of right and left eigenvalues for a matrix in $M_n(\mathbb{H})$. For 
	$A \in M_n(\mathbb{H})$, a quaternion $\lambda_0 \in \mathbb{H}$ is called a right (left) 
	eigenvalue if there exists a nonzero vector $x \in \mathbb{H}^n$ such that $Ax=x\lambda_0$ 
	($Ax=\lambda_0 x$). Note that if $\lambda_0$ is a right eigenvalue of $A$, then any 
	quaternion similar to $\lambda_0$ is also a right eigenvalue of $A$. Therefore, there can 
	be infinitely many right eigenvalues for $A \in M_n(\mathbb{H})$. However, any matrix 
	$A \in M_n(\mathbb{H})$ has exactly $n$ right eigenvalues, which are complex numbers having 
	nonnegative imaginary parts (Theorem $5.4$, \cite{Zhang}). These right eigenvalues are called 
	the standard eigenvalues of $A$. Any quaternion which is a right eigenvalue of $A$ lies in 
	one of the equivalence classes of these $n$ complex numbers. On the contrary, the number of 
	left eigenvalues up to equivalent classes is still unknown and a quaternion similar to a left 
	eigenvalue need not be a left eigenvalue. Thus in comparison to left eigenvalues, 
	right eigenvalues have received more attention in the literature. 
	
	$A \in M_n(\mathbb{H})$ is said to be diagonalizable if there exists an invertible matrix 
	$S \in M_n(\mathbb{H})$ such that $S^{-1}AS=J$, where 
	$J= \text{diag}(\lambda_1,\lambda_2,\dots,\lambda_n)$ and 
	$\lambda_i$'s are the standard eigenvalues of $A$. Given a matrix $A \in M_n(\mathbb{H})$, 
	we can write $A=A_1 +A_2 j$, where $A_1$, $A_2 \in M_n(\mathbb{C})$. We can then 
	associate to $A$, a $2n \times 2n$ complex block matrix, $\bigchi_A = 
	\begin{bmatrix}
		A_1 & A_2 \\
		-\bar{A_2} & \bar{A_1}.
	\end{bmatrix}$, called the complex adjoint matrix of $A$. For $A \in M_n(\mathbb{H})$, the 
	Frobenius norm and the spectral norm are defined as follows:\\
	(1) $||A||_F = \big(\text{trace}A^*A\big)^{1/2}$ \\
	(2) $||A||_2 = \displaystyle \sup_{x \neq 0} \Bigg\{\frac{||Ax||_2}{||x||_2}: 
	x \in \mathbb{H}^n \Bigg\}$.\\ For $A \in M_n(\mathbb{H})$ it is easy to verify that 
	$\sqrt{2}||A||_F =||\bigchi_A||_F$ and $||A||_2 =||\bigchi_A||_2$. Unless and until specified, 
	all the matrices considered in this paper are quaternion matrices. We now list some 
	fundamental information about matrix A and its complex adjoint matrix $\bigchi_A$. These
	results can be found in \cite{Zhang}.
	
	\begin{proposition}\label{Prop-Properties-complex adjoint}
		Let $A,B \in M_n(\mathbb{H})$ and $\alpha \in \mathbb{R}$. Then
		\begin{itemize}
			\item[(a)] $\bigchi_{I_n}= I_{2n}$.
			\item[(b)] $\bigchi_{AB} = \bigchi_A \bigchi_B$.
			\item[(c)] $\bigchi_{\alpha A} = \alpha \bigchi_A$.
			\item[(d)] $\bigchi_{A+B} = \bigchi_A +\bigchi_B$.
			\item[(e)] $\bigchi_{A^*} = (\bigchi_A)^*$.
			\item[(f)] $\bigchi_{A^{-1}} = (\bigchi_A)^{-1}$, if $A^{-1}$ exists.
			\item[(g)] $\bigchi_A$ is unitary, Hermitian, diagonalizable, invertible or normal 
			if and only if $A$ is unitary, Hermitian, diagonalizable, invertible, or normal respectively.
			\item[(h)] $\bigchi_A \bigchi_B= \bigchi_B \bigchi_A$ if and only if $AB=BA$.
			\item[(i)] $\lambda_1, \lambda_2, \dots, \lambda_n$ are the standard eigenvalues
			of $A$ if and only if $\lambda_1, \lambda_2, \dots, \lambda_n, \\
			\bar{\lambda}_1, \bar{\lambda}_2, \dots, \bar{\lambda}_n$ are the eigenvalues of $\bigchi_A$.
			\item[(j)] A complex matrix $A$ is diagonalizable over $\mathbb{H}$ if and only if it is 
			diagonalizable over $\mathbb{C}$.
		\end{itemize}	
	\end{proposition}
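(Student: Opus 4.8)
The plan is to recognize the complex-adjoint map $A \mapsto \bigchi_A$ as an injective, unital $*$-homomorphism of real algebras from $M_n(\mathbb{H})$ into $M_{2n}(\mathbb{C})$, establish (a)--(e) by this route, read off (f)--(h) as formal consequences, and treat the spectral statements (i), (j) together with the diagonalizability clause of (g) separately. The computational engine behind everything is the rule $\mathbf{j}\,c = \bar{c}\,\mathbf{j}$ for $c \in \mathbb{C}$, equivalently $\mathbf{j} C = \bar{C}\,\mathbf{j}$ for $C \in M_n(\mathbb{C})$, together with $\mathbf{j}^2 = -1$. Writing $A = A_1 + A_2\mathbf{j}$ and $B = B_1 + B_2\mathbf{j}$, I would first record that $\alpha A = (\alpha A_1) + (\alpha A_2)\mathbf{j}$ and $A + B = (A_1 + B_1) + (A_2 + B_2)\mathbf{j}$, so that (c) and (d) follow at once from the block definition, while $I_n = I_n + 0\cdot\mathbf{j}$ gives $\bigchi_{I_n} = I_{2n}$, which is (a). I would also note here the injectivity of $\bigchi$, since the blocks $A_1, A_2$ are read off directly from the first block-row of $\bigchi_A$.

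For (b), the key computation is $AB = (A_1 B_1 - A_2\bar{B_2}) + (A_1 B_2 + A_2\bar{B_1})\mathbf{j}$, obtained by expanding the product and pushing every $\mathbf{j}$ to the right via $\mathbf{j} B_i = \bar{B_i}\,\mathbf{j}$ and $\mathbf{j}^2=-1$; one then checks by $2 \times 2$ block multiplication that the first block-row of $\bigchi_A\bigchi_B$ is exactly $(A_1 B_1 - A_2 \bar{B_2},\ A_1 B_2 + A_2 \bar{B_1})$, with the second block-row matching $-\overline{(AB)_2}$ and $\overline{(AB)_1}$ by the same computation. For (e), I would compute $A^* = A_1^* + (-A_2^T)\mathbf{j}$ from the entrywise conjugation formula $\overline{c_1 + c_2\mathbf{j}} = \bar{c_1} - c_2\mathbf{j}$, and then verify that both $\bigchi_{A^*}$ and $(\bigchi_A)^*$ equal $\begin{bmatrix} A_1^* & -A_2^T \\ A_2^* & A_1^T\end{bmatrix}$. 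Items (a)--(e) then say precisely that $\bigchi$ is an injective unital $*$-homomorphism.

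From this, (f), (h) and the unitary/Hermitian/normal/invertible clauses of (g) follow formally. For (f), (a) and (b) give $\bigchi_A\bigchi_{A^{-1}} = \bigchi_{I_n} = I_{2n}$. For (h), $AB = BA$ is equivalent to $\bigchi_{AB} = \bigchi_{BA}$ by injectivity, hence to $\bigchi_A\bigchi_B = \bigchi_B\bigchi_A$ by (b). For (g), $A$ is normal iff $AA^* = A^*A$, which by (b), (e) and injectivity is equivalent to $\bigchi_A(\bigchi_A)^* = (\bigchi_A)^*\bigchi_A$; the unitary and Hermitian cases are identical in spirit, and invertibility is (f).

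The substantive part is the spectral correspondence (i), the diagonalizability clause of (g), and (j). Here I would exploit the symmetry $\bigchi_A = J\,\overline{\bigchi_A}\,J^{-1}$ with $J = \begin{bmatrix} 0 & I_n \\ -I_n & 0\end{bmatrix}$, which forces the spectrum of $\bigchi_A$ to be closed under complex conjugation and pairs an eigenvector $x$ for $\mu$ with $J\bar{x}$ for $\bar{\mu}$; combined with the standard-eigenvalue theorem (Theorem $5.4$, \cite{Zhang}) this yields (i). The forward direction of diagonalizability is easy: if $S^{-1}AS = \mathrm{diag}(\lambda_1,\dots,\lambda_n)$, then applying $\bigchi$ and using (b), (f) shows $\bigchi_{S}^{-1}\bigchi_A\bigchi_{S} = \mathrm{diag}(\lambda_1,\dots,\lambda_n,\bar{\lambda}_1,\dots,\bar{\lambda}_n)$, so $\bigchi_A$ is diagonalizable over $\mathbb{C}$. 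The \textbf{main obstacle} is the converse: producing a quaternionic similarity from a merely complex diagonalization of $\bigchi_A$, since an arbitrary eigenbasis of $\bigchi_A$ need not respect the quaternionic block structure. I expect to resolve this by using the pairing $x \leftrightarrow J\bar{x}$ to select an eigenbasis of the compatible form arising from quaternion eigenvectors $v = v_1 + v_2\mathbf{j}$ via $v \mapsto \begin{bmatrix} v_1 \\ -\bar{v_2}\end{bmatrix}$, thereby descending the complex diagonalization to a quaternionic one; statement (j) is then the special case $A_2 = 0$, where the complex eigenstructure already lies in $M_n(\mathbb{C})$.
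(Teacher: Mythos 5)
The paper itself states this proposition explicitly ``without proof'' --- these are standard facts collected from Zhang's survey \cite{Zhang} --- so there is no in-paper argument to compare against; what you have written is essentially the standard proof from the literature. Your algebraic core is correct and complete: the identities $\textbf{j}c=\bar{c}\,\textbf{j}$ and $\overline{c_1+c_2\textbf{j}}=\bar{c}_1-c_2\textbf{j}$ yield exactly the block formulas you state (the entrywise check of $\bigchi_{AB}$ and $\bigchi_{A^*}$ goes through), and (f), (h) and the unitary/Hermitian/normal/invertible clauses of (g) do follow formally from injectivity and the unital $*$-homomorphism property. The symmetry $\bigchi_A=J\,\overline{\bigchi_A}\,J^{-1}$ and the eigenvector correspondence $v=v_1+v_2\textbf{j}\mapsto (v_1,-\bar{v}_2)^T$ are also correct; the paired eigenvector $J\bar{x}$ corresponds to $v\textbf{j}$, which carries the right eigenvalue $\lambda$ to $\bar{\lambda}$. (One cosmetic point: item (a) as printed should read $\bigchi_{I_n}=I_{2n}$; you silently correct this.)

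The one place where you have a plan rather than a proof is the converse direction of the diagonalizability clause of (g): from a complex diagonalization of $\bigchi_A$ you must produce $n$ quaternion eigenvectors that are right-linearly independent over $\mathbb{H}$. Your pairing idea is the right one, but executing it requires two distinct cases. For an eigenvalue $\lambda$ with $\operatorname{Im}\lambda>0$ you pull back a basis of $\ker(\bigchi_A-\lambda I)$ under $x\mapsto x_1-\bar{x}_2\textbf{j}$. For a real eigenvalue $\lambda$, however, the eigenspace $\ker(\bigchi_A-\lambda I)$ is invariant under the antilinear map $x\mapsto J\bar{x}$, which squares to $-I$ and therefore endows that eigenspace with a quaternionic structure; this forces it to be even-dimensional and to admit a basis of the form $x_1,J\bar{x}_1,\dots,x_k,J\bar{x}_k$, of which you keep only $x_1,\dots,x_k$. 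One must then verify that the resulting $n$ quaternion vectors are right-$\mathbb{H}$-independent, which follows because the associated $2n$ complex vectors $x_i, J\bar{x}_i$ form a basis of $\mathbb{C}^{2n}$. Statement (j) then does reduce, as you say, to the case $A_2=0$, where $\bigchi_A=A\oplus\bar{A}$ and complex diagonalizability of $A\oplus\bar{A}$ is equivalent to that of $A$. None of this is a wrong turn --- it is exactly how the cited result is proved --- but as written the real-eigenvalue case is a gap you would need to fill.
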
 
	
	The following remark is worth pointing out.
	
	\begin{remark}\label{Rem-standard eigenvaues-complex matrix}
		\
		\begin{enumerate}
			\item For a complex matrix $A$, the eigenvalues over $\mathbb{C}$ and the standard 
			eigenvalues over $\mathbb{H}$ are not the same in general. For example consider 
			$A=\begin{bmatrix}
				1+i & 0 \\
				0 & 1-i
			\end{bmatrix}$. Then the eigenvalues of $A$ over $\mathbb{C}$ are $1+i$ and $1-i$, whereas,
			the standard eigenvalues of $A$ over $\mathbb{H}$ are $\lambda_1 = \lambda_2 = 1+i$.
			
			\item Note that as sets $M_n(\mathbb{C}) \subsetneq M_n(\mathbb{H})$ and every unitary 
			matrix in $M_n(\mathbb{C})$ is a unitary matrix in $M_n(\mathbb{H})$ as well. However, there are 
			more unitary matrices in $M_n(\mathbb{H})$ than in $M_n(\mathbb{C})$. 
			For example the matrix $A=\begin{bmatrix}
				j & 0 \\
				0 & j
			\end{bmatrix}$ is unitary in $M_n(\mathbb{H})$ but $A \notin M_n(\mathbb{C})$. The same 
			happens for normal, Hermitian, positive (semi)definite and diagonalizable matrices.
		\end{enumerate}
		
	\end{remark}

	We now define matrix polynomials where the coefficients are from $M_n(\mathbb{H})$. As 
	the multiplication of quaternions is noncommutative, we have the notion of right and left 
	matrix polynomials. A right quaternion matrix polynomial of size $n$ and degree $m$ is a 
	mapping $P: \mathbb{H} \rightarrow M_n(\mathbb{H})$ defined by 
	$P(\lambda)= \displaystyle \sum_{i=0}^{m}A_i \lambda^i$, where $A_i \in M_n(\mathbb{H})$ 
	(that is, the indeterminate $\lambda$ is on the right of the matrix coefficients). 
	A scalar $\lambda_0 \in \mathbb{H}$ is said to be a right eigenvalue of $P(\lambda)$ if 
	$\displaystyle \sum_{i=0}^{m}A_i x \lambda_0^i =0$ for some nonzero vector $
	x \in \mathbb{H}^n$.  If the leading coefficient $A_m$ is invertible we associate a 
	monic quaternion matrix polynomial 
	$P_U(\lambda) = I \lambda^m + B_{m-1} \lambda^{m-1}+ \cdots + B_{1}\lambda +B_0$, where 
	$B_i= A_m^{-1} A_i$ for $i= 0, 1, \ldots , m-1$. It is easy to verify that the right 
	eigenvalues of $P(\lambda)$ and $P_U(\lambda)$ are the same. The right eigenvalues of $P_U$ 
	are the same as the right eigenvalues of the block companion matrix 
	$C_P = \begin{bmatrix}
		0 & I & 0 & \cdots & 0 \\
		0 & 0 & I & \cdots & 0 \\
		\vdots & \vdots & \vdots & \ddots & \vdots \\
		-B_0 & -B_1 & -B_2 & \cdots & -B_{m-1}\\ 
	\end{bmatrix} \in M_{mn}(\mathbb{H})$ (for details, see \cite{Ahmad-Ali}). We define 
	the standard eigenvalues of $P(\lambda)$ to be the standard eigenvalues of $C_P$. 
	
	An element $\lambda_0 \in \mathbb{H}$ is called a left eigenvalue of $P(\lambda)$ if 
	$\displaystyle \sum_{i=0}^{m}A_i \lambda_0^i x =0$ for some nonzero vector 
	$x \in \mathbb{H}^n$. Note that the left eigenvalues of $P(\lambda)$ coincide with 
	the left eigenvalues of $C_P$ (the proof of this statement is very similar to that of 
	Theorem $5.1$ of \cite{Ahmad-Ali}). An $n \times n$ left quaternion matrix polynomial is 
	a map $Q: \mathbb{H} \rightarrow M_n(\mathbb{H})$ defined by 
	$Q(\lambda)= \displaystyle \sum_{i=0}^{m} \lambda^i A_i$, where $A_i \in M_n(\mathbb{H})$ 
	(note that in this case, the indeterminate $\lambda$ is on the left of the matrix 
	coefficients). A quaternion $\lambda_0$ is called a left eigenvalue of $Q(\lambda)$ if 
	$\displaystyle \sum_{i=0}^{m} \lambda_0^i A_i x =0$ for some nonzero vector 
	$x \in \mathbb{H}^n$. However, one cannot define right eigenvalues for left quaternion 
	matrix polynomials in the way we defined them for right quaternion matrix polynomials. 
	Moreover, when the leading coefficient is invertible, it is easy to verify that the left 
	eigenvalues of a left quaternion matrix polynomial are the same as the left eigenvalues 
	of the corresponding block companion matrix (for details, see \cite{Ahmad-Ali}). 
	
	As mentioned earlier, for a quaternion matrix the number of left eigenvalues is still unknown and 
	hence is less studied in the literature. Therefore, as far as this paper goes, we restrict 
	ourselves to the study of right eigenvalues of right quaternion matrix polynomials. 
	We refer to right quaternion matrix polynomials as just quaternion matrix 
	polynomials. 
	
	\noindent
	{\it Assumptions:} Throughout this paper, we assume that the leading coefficient of a 
	quaternion matrix polynomial is invertible.
	
	\section{Main results}\label{sec-3}
	
	The key results of this paper are presented in this section, which has been further 
	divided into subsections for reading convenience.
	
	\subsection{The Hoffman-Wielandt inequality and its generalization for quaternion matrices} 
	\hspace*{\fill}\label{sec-3.1}
	
	The Hoffman-Wielandt inequality in general does not hold for quaternion matrices if we 
	consider right eigenvalues that are not standard. For example consider normal matrices 
	$A = 
	\begin{bmatrix}
		1+i & 0\\
		0 & 1
	\end{bmatrix}$ and $B = 
	\begin{bmatrix}
		i & 0 \\
		0 & 1
	\end{bmatrix}$, whose standard eigenvalues are $1+i$, $1$ and $i$, $1$ respectively. 
	For the matrix $A$ we consider the right eigenvalues $\mu_1 = 1-i$, $\mu_2 =1$ and 
	for $B$ we consider $\delta_1=i$, $\delta_2=1$. Then for any permutation $\pi$ on 
	$\{1, 2\}$ the summation $\displaystyle \sum_{i=1}^{2} |\mu_i -\delta_{\pi(i)}|^2 >1$, 
	whereas $||A-B||^2_F =1$. We therefore consider the standard right eigenvalues of 
	quaternion matrices and prove the Hoffman-Wielandt inequality for standard eigenvalues 
	below.
	
	\begin{theorem}\label{Thm-H-W inqequality-quaternions}
		Let $A$ and $B$ in $M_n(\mathbb{H})$ be normal matrices. Let $\mu_1, \mu_2 \ldots, \mu_n$ 
		and $\delta_1, \delta_2 \ldots, \delta_n$ be the standard eigenvalues of $A$ and $B$ 
		respectively. Then there exists a permutation $\pi$ on the indices $1,2, \ldots n$ such that 
		\begin{equation}\label{Eqn-H-W inequality}
			\displaystyle \sum_{i=1}^{n} |\mu_i 
			- \delta_{\pi(i)}|^2 \leq ||A-B||^2_F.
		\end{equation}
	\end{theorem}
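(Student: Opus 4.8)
The plan is to reduce the quaternionic statement to the classical complex Hoffman--Wielandt inequality (Theorem \ref{Thm-H-W inequality}) via the complex adjoint map $A \mapsto \bigchi_A$. The key structural facts I would exploit are all recorded in Proposition \ref{Prop-Properties-complex adjoint}: the map is additive (part (d)), it preserves normality (part (g)), it converts the Frobenius norm by the clean scaling $\|\bigchi_A\|_F = 2\|A\|_F$, and most importantly it describes the spectrum by part (i) --- if $\mu_1,\dots,\mu_n$ are the standard eigenvalues of $A$, then the eigenvalues of $\bigchi_A$ are exactly $\mu_1,\dots,\mu_n,\bar{\mu}_1,\dots,\bar{\mu}_n$, and likewise $\delta_1,\dots,\delta_n,\bar{\delta}_1,\dots,\bar{\delta}_n$ for $\bigchi_B$.

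First I would observe that since $A$ and $B$ are normal, so are $\bigchi_A$ and $\bigchi_B$, which are genuine $2n \times 2n$ complex normal matrices. Applying the classical Hoffman--Wielandt inequality (Theorem \ref{Thm-H-W inequality}) to the pair $\bigchi_A$, $\bigchi_B$ yields a permutation $\sigma$ of $\{1,\dots,2n\}$ with
\begin{equation*}
\sum_{\text{all } 2n \text{ eigenvalues}} |(\text{eig of } \bigchi_A) - (\text{eig of } \bigchi_B)_{\sigma}|^2 \leq \|\bigchi_A - \bigchi_B\|_F^2 = \|\bigchi_{A-B}\|_F^2 = 4\|A-B\|_F^2,
\end{equation*}
using additivity of $\bigchi$ and the norm identity. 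The left-hand side runs over the combined list $\{\mu_i\} \cup \{\bar{\mu}_i\}$ matched against $\{\delta_j\} \cup \{\bar{\delta}_j\}$.

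The main obstacle is passing from the optimal matching $\sigma$ on the $2n$ eigenvalues of the adjoint matrices down to a single permutation $\pi$ on $n$ indices with the factor of $4$ absorbed correctly. The cleanest route is to avoid using the arbitrary $\sigma$ directly and instead invoke the bistochastic/doubly-stochastic formulation underlying Hoffman--Wielandt: the inequality $\sum |\mu_i - \delta_{\pi(i)}|^2 \le \|A-B\|_F^2$ is equivalent to asserting that the assignment-problem optimum over the $n \times n$ cost matrix $c_{ij} = |\mu_i - \delta_j|^2$ is at most $\|A-B\|_F^2$. I would set up the $2n \times 2n$ cost matrix for $\bigchi_A, \bigchi_B$ in the block-symmetric ordering $(\mu_1,\dots,\mu_n,\bar\mu_1,\dots,\bar\mu_n)$ versus $(\delta_1,\dots,\delta_n,\bar\delta_1,\dots,\bar\delta_n)$ and note the conjugation symmetry $|\mu_i - \delta_j|^2 = |\bar\mu_i - \bar\delta_j|^2$, so the cost matrix has the block form $\left[\begin{smallmatrix} C & D \\ D & C \end{smallmatrix}\right]$ where $C_{ij}=|\mu_i-\delta_j|^2$ and $D_{ij}=|\mu_i - \bar\delta_j|^2$. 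By a symmetrization argument, the minimum-cost assignment for this symmetric $2n\times 2n$ problem can be taken to respect the block structure (one matches the unbarred $\mu$'s entirely among unbarred/barred $\delta$'s in a symmetric fashion), and the optimal value is at least $2$ times the optimal value of the $C$-block alone. This forces $2\sum_{i}|\mu_i-\delta_{\pi(i)}|^2 \le 4\|A-B\|_F^2$ for the assignment $\pi$ optimal on $C$, giving the claimed bound after dividing by $2$.

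I expect the symmetrization step to require the most care: one must argue rigorously that an optimal matching of the conjugation-symmetric eigenvalue list can be chosen compatibly with the $\mu \leftrightarrow \delta$ (unbarred-to-unbarred) pairing so that no ``cross'' matchings between $\mu_i$ and $\bar\delta_j$ help. This follows because the permanent/assignment LP relaxation has a doubly-stochastic optimizer that can be averaged with its conjugate $\sigma \mapsto \bar\sigma$ without increasing cost (by the symmetry of the cost matrix), and Birkhoff's theorem then extracts a genuine permutation achieving the block-respecting optimum. Once this reduction is in place, the remaining steps --- verifying $\|\bigchi_{A-B}\|_F^2 = 4\|A-B\|_F^2$ and that standard eigenvalues lie in $\mathbb{C}^+$ so the matching is well defined --- are routine and follow directly from the definitions and Proposition \ref{Prop-Properties-complex adjoint}.
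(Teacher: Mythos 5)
Your overall architecture --- pass to the complex adjoints $\bigchi_A$, $\bigchi_B$, apply the classical Hoffman--Wielandt inequality to these $2n\times 2n$ normal complex matrices, and then show that the resulting $2n$-point matching dominates twice an optimal $n$-point matching of $\{\mu_i\}$ against $\{\delta_j\}$ --- is exactly the paper's strategy; the paper carries out the last step by an explicit swapping induction that splits the doubled matching into two genuine permutations of $\{1,\dots,n\}$, whereas you route it through the assignment LP and Birkhoff's theorem. That difference is cosmetic. However, there are two concrete problems. First, the Frobenius norm constant: the correct identity is $\|\bigchi_M\|_F^2 = 2\|M\|_F^2$ (each complex entry of $M=M_1+M_2\mathbf{j}$ appears twice, up to conjugation, in $\bigchi_M$), not $\|\bigchi_M\|_F^2 = 4\|M\|_F^2$. (The paper's preliminaries do display the misprint $2\|A\|_F=\|\bigchi_A\|_F$, but its proof uses the correct squared relation.) With your factor of $4$, the chain $2\sum_i|\mu_i-\delta_{\pi(i)}|^2 \le 4\|A-B\|_F^2$ only yields $\sum_i|\mu_i-\delta_{\pi(i)}|^2 \le 2\|A-B\|_F^2$, which is strictly weaker than the statement; with the correct factor of $2$ the proof closes.

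Second, and more seriously, your symmetrization step is not justified as stated. The claim that the optimal assignment for $\left[\begin{smallmatrix} C & D\\ D& C\end{smallmatrix}\right]$ is at least twice the optimal assignment for $C$ is false for a general cost matrix of that shape: take $n=1$, $C=(1)$, $D=(0)$; the $2\times2$ optimum is $0$, while twice the $C$-optimum is $2$. Averaging a permutation matrix with its conjugate and invoking the symmetry of $M$ only shows the doubly stochastic optimizer may be taken of the block form $\left[\begin{smallmatrix} E & F\\ F& E\end{smallmatrix}\right]$; it does not eliminate the $D$-blocks. The missing ingredient is the entrywise inequality $D_{ij}=|\mu_i-\bar\delta_j|^2=|\bar\mu_i-\delta_j|^2\ \ge\ |\mu_i-\delta_j|^2=C_{ij}$, which holds precisely because standard eigenvalues lie in the closed upper half-plane ($|a-b|\le|\bar a-b|$ for $a,b\in\mathbb{C}^+$). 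This is the crux of the whole reduction --- it is the lemma the paper highlights in its displayed chain of inequalities --- and you relegate it to a closing remark about the matching being ``well defined.'' Once it is inserted, your argument does work: for doubly stochastic $P=\left[\begin{smallmatrix} P_{11} & P_{12}\\ P_{21}& P_{22}\end{smallmatrix}\right]$ one gets $\langle M,P\rangle\ \ge\ \langle C,\,P_{11}+P_{12}+P_{21}+P_{22}\rangle$, and $\tfrac12(P_{11}+P_{12}+P_{21}+P_{22})$ is $n\times n$ doubly stochastic, so Birkhoff yields the desired factor of $2$.
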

	
	\begin{proof}
		As $A$ and $B$ are normal, we see from  
		Proposition \ref{Prop-Properties-complex adjoint} $(g)$ that the matrices $\bigchi_{A}$ and 
		$\bigchi_{B}$ are normal matrices in $M_{2n}(\mathbb{C})$. Therefore, the eigenvalues of $\bigchi_{A}$ and $\bigchi_{B}$ are 
		$\mu_1, \mu_2, \ldots,\mu_n, \bar{\mu}_1, \bar{\mu}_2, \ldots, \bar{\mu}_n$ and 
		$\delta_1, \delta_2, \ldots, \delta_n, \bar{\delta}_1, \bar{\delta}_2, \ldots, 
		\bar{\delta}_n$ respectively. Let us rename these eigenvalues as 
		$\mu_1, \mu_2, \ldots, \mu_{2n}$ and $\delta_1, \delta_2, \ldots, \delta_{2n}$ respectively 
		in the same order. Then, from Theorem \ref{Thm-H-W inequality} we infer that there exists a permutation $\sigma$ on the indices $1, 2, \ldots, 2n$ such that 
		\begin{equation*}
			\displaystyle \sum_{i=1}^{2n} |\mu_i -\delta_{\sigma(i)}|^2 \leq 
			||\bigchi_{A}-\bigchi_{B}||^2_F.
		\end{equation*} \\
		Consider,
		\begin{align*}
			||\bigchi_{A}-\bigchi_{B}||^2_F 
			&\geq \displaystyle \sum_{i=1}^{2n} |\mu_i - \delta_{\sigma(i)}|^2\\ 
			&= \displaystyle \sum_{i=1}^{n}|\mu_i-\delta_{\sigma(i)}|^2 + \displaystyle \sum_{i=n+1}^{2n}
			|\bar{\mu}_{i-n}-\delta_{\sigma(i)}|^2.
		\end{align*}
		\noindent
		Define $\gamma_i:= 
		\begin{cases}
			\delta_{\sigma(i)}, &\quad \text{if\space \space} 1 \leq \sigma(i) \leq n\\
			\bar{\delta}_{\sigma(i)}, &\quad\text{if \space \space} n+1 \leq \sigma(i) \leq 2n.
		\end{cases}$\\
		\noindent
		Recalling that for any two elements $a, b \in \mathbb{C}^+, \ |\bar{a}-b| \geq |a-b|$, 
		and $|\bar{a}-\bar{b}| = |a-b|$, we see that 
		\begin{align*}
			\displaystyle \sum_{i=1}^{n} |\mu_i - \delta_{\sigma(i)}|^2 + 
			\displaystyle \sum_{i=n+1}^{2n} |\bar{\mu}_{i-n} - \delta_{\sigma(i)}|^2 
			\geq \displaystyle \sum_{i=1}^{n} |\mu_i - \gamma_i|^2 + 
			\displaystyle \sum_{i=n+1}^{2n} |\mu_{i-n} - \gamma_i|^2. 
		\end{align*} 
		Since $2||A-B||^2_F = ||\bigchi_{A}-\bigchi_{B}||^2_F$, 
		we have, 
		\begin{equation}\label{Eqn-1}
			\displaystyle \sum_{i=1}^{n}|\mu_i-\gamma_i|^2 + \displaystyle 
			\sum_{i=n+1}^{2n}|\mu_{i-n}-\gamma_i|^2 \leq 2||A-B||^2_F.
		\end{equation} 
		Note that $\{\gamma_1, \ldots, \gamma_{2n}\} = \{\delta_1, \ldots, \delta_n\}$, 
		where each $\delta_i$ repeats exactly twice in Inequality \eqref{Eqn-1}. \\
		Let $S_1 = \displaystyle \sum_{i=1}^{n}|\mu_i-\gamma_i|^2$ and
		$S_2= \displaystyle \sum_{i=n+1}^{2n}|\mu_{i-n}-\gamma_i|^2$. 
		
		\noindent
		{\it Claim:} It is possible to rearrange the summations 
		$S_1$ and $S_2$ by interchanging the summands in $S_1$ (if necessary) with the 
		summands in $S_2$ to get two new summations in which both $\mu_i$'s and $\gamma_i$'s 
		are distinct.
		
		\noindent
		{\it Proof of the Claim:} 
		Let $k\leq n$ be the smallest integer such that 
		$\gamma_1,\gamma_2,\dots,\gamma_k$ are distinct in $S_1$. If $k=n$, we are done. 
		Assume that $k< n$. Consider the $(k+1)^{th}$ summand, $|\mu_{k+1} -\gamma_{k+1}|^2$ of 
		$S_1$. Then $\gamma_{k+1} = \gamma_q$ for some $1\leq q \leq k$. Swap 
		$|\mu_{k+1} -\gamma_{k+1}|^2$ with the $(k+1)^{th}$ summand $|\mu_{k+1} -\gamma_{n+k+1}|^2$ 
		of $S_2$. Then, the first $(k+1)$ summands of $S_1$ is 
		\begin{equation*}
			\displaystyle \sum_{i=1}^{k}|\mu_i-\gamma_i|^2+|\mu_{k+1}-\gamma_{n+k+1}|^2.
		\end{equation*} 
		If $\gamma_{n+k+1} \neq \gamma_r$ for any $1 \leq r \leq k$, we stop here. 
		If not, then $\gamma_{n+k+1} = \gamma_r$ for some $1\leq r \leq k$. Note that $r \neq q$, 
		because each $\gamma_i$ repeats exactly twice and we already have $\gamma_{k+1} = \gamma_q$. 
		Now swap the summand $|\mu_r-\gamma_r|^2$ of $S_1$ with the $r^{th}$ summand, 
		$|\mu_r-\gamma_{n+r}|^2$ of $S_2$ and proceed this procedure. After $(k+1)$ repetitions, 
		the first $(k+1)$ terms of $S_1$ is 
		\begin{equation*}
			\displaystyle \sum_{i=1}^{k+1} |\mu_i - \gamma_{j_i}|^2, 
		\end{equation*} 
		where $\gamma_{j_s} \neq \gamma_{j_t}$ if $s \neq t$. Thus, by mathematical induction, 
		we can rearrange the summands of $S_1$ and $S_2$ such that 
		\begin{equation*}
			S_1 +S_2 = 
			\displaystyle \sum_{i=1}^{n}|\mu_i-\gamma_{j_i}|^2 + 
			\displaystyle \sum_{i=1}^{n}|\mu_{i}-\gamma_{k_i}|^2, 
		\end{equation*} 
		where $\{\gamma_{j_1},\gamma_{j_2},\dots,\gamma_{j_n}\} = 
		\{\gamma_{k_1}, \gamma_{k_2},\dots,\gamma_{k_n}\} = \{\delta_1,\delta_2,\dots,\delta_n\}$. 
		This proves the claim.
		
		Define permutations $\sigma_1$ and $\sigma_2$ on $\{1, 2, \ldots, n\}$ by \\
		\begin{align*}
			\sigma_1(i) = j_i  \ \text{and} \ 
			\sigma_2(i) = k_i.
		\end{align*} 
		Then, 
		\begin{align*}
			\displaystyle \sum_{i=1}^{n}|\mu_i-\delta_{\sigma_1(i)}|^2 + 
			\displaystyle \sum_{i=1}^{n}|\mu_{i}-\delta_{\sigma_2(i)}|^2 
			& = \displaystyle \sum_{i=1}^{n}|\mu_i-\gamma_{j_i}|^2 + 
			\displaystyle \sum_{i=1}^{n}|\mu_{i}-\gamma_{k_i}|^2 \leq 2||A-B||^2_F.
		\end{align*} 
		Assuming without loss of generality that 
		\begin{equation*}
			\displaystyle \sum_{i=1}^{n}|\mu_i-\delta_{\sigma_1(i)}|^2 \leq 
			\displaystyle \sum_{i=1}^{n}|\mu_i-\delta_{\sigma_2(i)}|^2, 
		\end{equation*} 
		we thus conclude that 
		\begin{equation*}
			2 \displaystyle \sum_{i=1}^{n}|\mu_i-\delta_{\sigma_1(i)}|^2 \leq 2||A-B||^2_F.
		\end{equation*} 
		This implies 
		\begin{equation*}
			\displaystyle \sum_{i=1}^{n}|\mu_i-\delta_{\pi(i)}|^2 \leq ||A-B||^2_F, 
		\end{equation*} 
		where $\pi=\sigma_1$.
	\end{proof}
	
	We now prove a generalization of the Hoffman-Wielandt inequality along the one stated in 
	Theorem \ref{Thm-H-W type inequality}.
	
	\begin{theorem}\label{Thm-H-W type inequality_quaternions}
		Let $A$ be a diagonalizable matrix in $M_n(\mathbb{H})$ and $B$ be a normal matrix in
		$M_n(\mathbb{H})$. Let $\mu_1,\mu_2,\dots,\mu_n$ and $\delta_1, \delta_2, \ldots, \delta_n$
		be the standard eigenvalues of $A$ and $B$ respectively. Let $X$ be a nonsingular matrix such
		that $X^{-1}AX=D=\text{diag}(\mu_1,\mu_2,\dots,\mu_n)$. Then there exists a 
		permutation $\pi$ on $\{1,2,\dots,n\}$ such that 
		\begin{center}
			$\displaystyle \sum_{i=1}^{n}|\mu_i-\delta_{\pi(i)}|^2\leq ||X||^2_2||X^{-1}||^2_2
			||A-B||^2_F$.
		\end{center}
	\end{theorem}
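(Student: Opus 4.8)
The plan is to mirror the strategy used for Theorem~\ref{Thm-H-W inqequality-quaternions}, passing to complex adjoint matrices and invoking the complex Hoffman-Wielandt type inequality (Theorem~\ref{Thm-H-W type inequality}). First I would note that since $A$ is diagonalizable over $\mathbb{H}$, part (g) of Proposition~\ref{Prop-Properties-complex adjoint} guarantees that $\bigchi_A$ is diagonalizable over $\mathbb{C}$. Applying $\bigchi$ to the relation $X^{-1}AX=D$ and using parts (b) and (f) yields $\bigchi_X^{-1}\bigchi_A\bigchi_X=\bigchi_D$. Because $D=\text{diag}(\mu_1,\dots,\mu_n)$ has complex entries, its complex adjoint $\bigchi_D=\text{diag}(\mu_1,\dots,\mu_n,\bar\mu_1,\dots,\bar\mu_n)$ is again diagonal, so $\bigchi_X$ is a nonsingular matrix whose columns are eigenvectors of $\bigchi_A$. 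By part (i), the eigenvalues of $\bigchi_A$ and $\bigchi_B$ are $\mu_1,\dots,\mu_n,\bar\mu_1,\dots,\bar\mu_n$ and $\delta_1,\dots,\delta_n,\bar\delta_1,\dots,\bar\delta_n$, respectively.

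With these ingredients in place, I would apply Theorem~\ref{Thm-H-W type inequality} to the pair $(\bigchi_A,\bigchi_B)$ with eigenvector matrix $\bigchi_X$, obtaining a permutation $\sigma$ of $\{1,\dots,2n\}$ for which $\sum_{i=1}^{2n}|\alpha_i-\beta_{\sigma(i)}|^2\leq ||\bigchi_X||_2^2\,||\bigchi_X^{-1}||_2^2\,||\bigchi_A-\bigchi_B||_F^2$, where the $\alpha_i$ and $\beta_i$ list the eigenvalues of $\bigchi_A$ and $\bigchi_B$ in the order above. The next step is norm bookkeeping: using $||\bigchi_X||_2=||X||_2$ and $||\bigchi_{X^{-1}}||_2=||X^{-1}||_2$ together with part (f), and the identity $||\bigchi_A-\bigchi_B||_F^2=2||A-B||_F^2$, the right-hand side becomes $2\,||X||_2^2\,||X^{-1}||_2^2\,||A-B||_F^2$.

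The remaining work is precisely the combinatorial reduction carried out in the proof of Theorem~\ref{Thm-H-W inqequality-quaternions}. Since the standard eigenvalues of $A$ and $B$ lie in $\mathbb{C}^+$, I would repeatedly use $|a-b|\leq|\bar a-b|$ for $a,b\in\mathbb{C}^+$ to replace every conjugated eigenvalue by its unconjugated counterpart, turning the $2n$-term sum into two $n$-term sums $S_1$ and $S_2$ in which each $\delta_i$ appears exactly twice overall. I would then invoke the same swapping-and-induction argument to rearrange $S_1+S_2$ into $\sum_{i=1}^{n}|\mu_i-\delta_{\sigma_1(i)}|^2+\sum_{i=1}^{n}|\mu_i-\delta_{\sigma_2(i)}|^2$ for genuine permutations $\sigma_1,\sigma_2$ of $\{1,\dots,n\}$, take $\pi$ to be whichever of these gives the smaller sum, and halve the inequality to reach the claimed bound.

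The main obstacle I anticipate is not any single deep step but ensuring that the reduction of the $2n$-index permutation $\sigma$ to a single $n$-index permutation $\pi$ goes through verbatim: one must check that after removing conjugates each standard eigenvalue $\delta_i$ still occurs with total multiplicity two across $S_1$ and $S_2$, so that the rearrangement procedure embedded in Theorem~\ref{Thm-H-W inqequality-quaternions} applies unchanged and the factor of $2$ cancels cleanly. The genuinely new verifications relative to that theorem are that $\bigchi_X$---rather than a unitary diagonalizer---is the correct eigenvector matrix for $\bigchi_A$, and that its spectral norm and that of its inverse coincide with those of $X$.
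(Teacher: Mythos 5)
Your proposal is correct and follows essentially the same route as the paper's own proof: pass to the complex adjoint matrices, observe that $\bigchi_X$ diagonalizes $\bigchi_A$ since $\bigchi_{X^{-1}AX}=\bigchi_D$ is diagonal, apply the complex Hoffman--Wielandt type inequality to $(\bigchi_A,\bigchi_B)$, convert the norms via $||\bigchi_X||_2=||X||_2$, $||\bigchi_{X^{-1}}||_2=||X^{-1}||_2$ and $||\bigchi_A-\bigchi_B||_F^2=2||A-B||_F^2$, and then reuse the conjugate-elimination and swapping argument from the normal-matrix case to extract a single permutation $\pi$ on $\{1,\dots,n\}$ and cancel the factor of $2$. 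The paper's proof is exactly this, with the combinatorial reduction cited rather than repeated.
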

	
	\begin{proof} 
		Note that the eigenvalues of $\bigchi_A$ and $\bigchi_B$ are 
		$\mu_1, \mu_2, \ldots,\mu_n, \bar{\mu}_1, \bar{\mu}_2, \ldots, \bar{\mu}_n$ and 
		$\delta_1, \delta_2, \ldots, \delta_n, \bar{\delta}_1, \bar{\delta}_2, \ldots, 
		\bar{\delta}_n$ respectively. 
		Let us rename these as $\mu_1, \mu_2,\dots,\mu_{2n}$ and $\delta_1, \delta_2,
		\dots,\delta_{2n}$ respectively, in the same order. Notice that $\bigchi_B$ is normal as $B$ is 
		normal. Consider the matrix, 
		$\bigchi_{X^{-1}AX}=\bigchi_D=
		\begin{bmatrix}
			D & 0 \\
			0 & \bar{D}
		\end{bmatrix}$. Since 
		\begin{center}
			$\bigchi_{X^{-1}AX} = \bigchi_{X^{-1}}\bigchi_A\bigchi_X = 
			(\bigchi_X)^{-1}\bigchi_A\bigchi_X$,
		\end{center} we have 
		\begin{equation*}
			(\bigchi_X)^{-1}\bigchi_A\bigchi_X = 
			\text{diag}(\mu_1, \mu_2, \ldots, \mu_n, \bar{\mu}_1, \bar{\mu}_2, \ldots, \bar{\mu}_n). 
		\end{equation*}
		This implies that $\bigchi_A$ is diagonalizable over $\mathbb{C}$ through 
		$\bigchi_X$. Therefore by Theorem \ref{Thm-H-W type inequality} there exists a permutation 
		$\sigma$ on the indices $1, 2, \ldots, 2n$ such that 
		$\displaystyle \sum_{i=1}^{2n}|\mu_i-\delta_{\sigma (i)}|^2 \leq 
		||\bigchi_X||^2_2 ||\bigchi_X^{-1}||^2_2||\bigchi_A-\bigchi_B||^2_F$. 
		Since $||\bigchi_X||_2=||X||_2$, $||\bigchi_X^{-1}||_2=||X^{-1}||_2$ and 
		$2||A-B||^2_F = ||\bigchi_{A}-\bigchi_{B}||^2_F$, this implies 
		$\displaystyle \sum_{i=1}^{2n}|\mu_i-\delta_{\sigma(i)}|^2 \leq 
		2||X||^2_2||X^{-1}||^2_2||A-B||^2_F$. As in the proof of the Theorem 
		\ref{Thm-H-W inqequality-quaternions}, there exists a permutation $\pi$ on $\{1,2,\ldots,n\}$ 
		such that $\displaystyle 2\displaystyle \sum_{i=1}^{n}|\mu_i-\delta_{\pi(i)}|^2 \leq 
		\displaystyle \sum_{i=1}^{2n}|\mu_i-\delta_{\sigma(i)}|^2$. Therefore
		$\displaystyle \sum_{i=1}^{n}|\mu_i-\delta_{\pi(i)}|^2\leq ||X||^2_2||X^{-1}||^2_2||A-B||^2_F$. 
	\end{proof}
	
	\subsection{The Hoffman-Wielandt inequality for quaternion matrix polynomials}
	\hspace*{\fill}
	\label{sec-3.2}
	
	In this section, we investigate the Hoffman-Wielandt and its generalization for block companion matrices of 
	quaternion matrix polynomials. We begin with the following lemma, whose proof is a routine computation.
	
	\begin{lemma}\label{Lem-normal condition}
		Let $P(\lambda) = I \lambda^m + A_{m-1}\lambda^{m-1} + \cdots + A_1\lambda + A_0$ be a monic 
		matrix polynomial with block companion matrix $C_P$. 
		
		\begin{itemize}
			\item[(i)] If $m=1$, then $C_P$ is normal if and only if $A_0$ is normal.
			\item[(ii)] If $m=2$, then $C_P$ is normal if and only if $A_0$ is unitary, $A_1$ is normal 
			and $A_1^*A_0 = - A_1$.
			\item[(iii)] If $m \geq 3$, then $C_P$ is normal if and only if $A_0$ is unitary and 
			$A_1 = \cdots = A_{m-1} =0$.
		\end{itemize}
	\end{lemma}
	
	The inequality \eqref{Eqn-H-W inequality} does not generally hold for block 
	companion matrices of linear quaternion matrix polynomials whose coefficients are normal matrices. 
	We illustrate this with an example taken from \cite{Pallavi-Hadimani-Jayaraman}. 
	
	\begin{example}
		Let $P(\lambda) = \begin{bmatrix}
			2 & 0 \\
			0 & -2
		\end{bmatrix} \lambda +                                                                                                                                         \begin{bmatrix}
			2 & 2 \\
			2 & -14
		\end{bmatrix}$,
		$Q(\lambda) = \begin{bmatrix}
			1 & 0 \\
			0 & -\frac{5}{4}
		\end{bmatrix}  \lambda + \begin{bmatrix}
			2 & 5 \\
			5 & -\frac{30}{4}
		\end{bmatrix}$ whose respective block companion matrices are $C_P=\begin{bmatrix}
			-1 & -1 \\
			1 & -7
		\end{bmatrix}$ and $C_Q = \begin{bmatrix}
			-2 & -5 \\
			4 & -6
		\end{bmatrix}$. The standard eigenvalues of $C_P$ and $C_Q$ are respectively
		$\lambda_1 = - 4 - 2\sqrt{2}$, $\lambda_2 = - 4 + 2 \sqrt{2}$ and $\mu_1 = - 4 + 4i$, 
		$\mu_2 = -4 + 4i$. It is easy to verify that $||C_P - C_Q||_F^2 = 27$, whereas for any permutation 
		$\pi$ on $\{1,2\}, \ \displaystyle \sum_{i=1}^{2}| \lambda_{i} - \mu_{\pi(i)} |^2 = 48$.
	\end{example}
	
	However, for linear quaternion matrix polynomials with unitary coefficients, the inequality 
	\eqref{Eqn-H-W inequality} for the corresponding block companion matrices follows from  
	Theorem \ref{Thm-H-W inqequality-quaternions}, because the respective block companion 
	matrices are unitary. For quadratic quaternion matrix polynomials whose coefficients are 
	either normal or unitary matrices, the inequality \eqref{Eqn-H-W inequality} in general 
	fails to hold. The following example from \cite{Pallavi-Hadimani-Jayaraman} illustrates this.
	
	\begin{example}
		Let $P(\lambda) = 
		\begin{bmatrix}
			1 & 0 \\
			0 & 1 
		\end{bmatrix} \lambda^2 + 
		\begin{bmatrix}
			\frac{1}{\sqrt{2}} & \frac{1}{\sqrt{2}}\\
			\frac{1}{\sqrt{2}} & -\frac{1}{\sqrt{2}}
		\end{bmatrix} \lambda + 
		\begin{bmatrix}
			\frac{4}{\sqrt{41}} & \frac{5}{\sqrt{41}}\\
			\frac{5}{\sqrt{41}} & -\frac{4}{\sqrt{41}}
		\end{bmatrix}$ and 
		$Q(\lambda) = \begin{bmatrix}
			1 & 0 \\
			0 & 1 \\
		\end{bmatrix} \lambda^2 + \begin{bmatrix}
			\frac{1}{\sqrt{2}} & \frac{1}{\sqrt{2}} \\
			\frac{1}{\sqrt{2}} & -\frac{1}{\sqrt{2}} \\
		\end{bmatrix} \lambda + \begin{bmatrix}
			-\frac{1}{2} & \frac{\sqrt{3}}{2} \\
			-\frac{\sqrt{3}}{2} & -\frac{1}{2} 
		\end{bmatrix}$. Consider the respective block companion matrices $C_P$ and $C_Q$. The 
		standard eigenvalues of $C_P$ and $C_Q$ are $\lambda_1=1.6163$, $\lambda_2 =
		-0.4969+0.8643i$, $\lambda_3= -0.4969+0.8643i$, $\lambda_4= -0.6225$ and $\mu_1=1$, $\mu_2=1$, 
		$\mu_3=-1$, $\mu_4=-1$ respectively. Note that $||C - D||_F^2 = 4$. However,
		$\displaystyle \sum_{i=1}^{4} |\mu_{\pi(i)} - \lambda_i|^2 \geq 4.5102 > 4$ for any 
		permutation $\pi$ on $\{1,2,3,4\}$.
	\end{example}
	
	We now derive, as a consequence of Lemma \ref{Lem-normal condition} and Theorem \ref{Thm-H-W inqequality-quaternions}, the Hoffman-Wielandt inequality for block companion matrices of certain types of quaternion matrix polynomials. 
	
	\begin{theorem}
		Let $P(\lambda)$ and $Q(\lambda)$ be monic matrix polynomials of degree $m$ and 
		size $n$ which satisfy the conditions of Lemma \ref{Lem-normal condition}. If $C_P$ and 
		$C_Q$ are the block companion matrices of $P(\lambda)$ and $Q(\lambda)$, then there exists 
		a permutation $\pi$ on $\{1,2, \ldots, mn\}$ such that 
		\begin{equation}
			\displaystyle \sum_{i=1}^{mn} |\lambda_i - \mu_{\pi(i)}|^2 \leq ||C_P-C_Q||^2_F, 
		\end{equation}
		where $\{ \lambda_i \}$ and $\{\mu_i\}$ are the standard eigenvalues of $C_P$ and $C_Q$ respectively.
	\end{theorem}
	
	\begin{proof}
		By assumptions on $P(\lambda)$ and $Q(\lambda)$, the matrices $C_P$ and $C_Q$ are normal. The 
		desired conclusion follows from Theorem \ref{Thm-H-W inqequality-quaternions}.
	\end{proof}
	
	We now investigate a generalization of the Hoffman-Wielandt inequality for the 
	block companion matrices of quaternion matrix polynomials, along similar lines as stated in Theorem 
	\ref{Thm-H-W type inequality}. 
	
	Given an $n \times n$ quaternion matrix polynomial $P(\lambda)=\displaystyle 
	\sum_{i=0}^{m} A_i\lambda^i$ we associate a $2n \times 2n$ complex matrix polynomial 
	$P_{\chi}: \mathbb{C} \rightarrow M_{2n}(\mathbb{C})$ defined by 
	$P_\chi(\lambda)=\displaystyle \sum_{i=0}^{m} \bigchi_{A_i}\lambda^i$. 
	We call $P_\chi(\lambda)$ as the complex adjoint matrix polynomial of $P(\lambda)$. 
	We have the following relation between the standard eigenvalues of $P(\lambda)$ and the 
	eigenvalues of $P_{\chi}(\lambda)$.
	
	\begin{theorem}\label{Thm-eigenvalue relation}
		Let $P(\lambda) = I\lambda^m + A_{m-1}\lambda^{m-1} + \cdots + A_1\lambda + A_0$ be a 
		quaternion matrix polynomial. Then, $\lambda_1, \lambda_2, \ldots, \lambda_{mn}$ are 
		standard eigenvalues of $P(\lambda)$ if and only if 
		$\lambda_1, \lambda_2, \ldots, \lambda_{mn}, \bar{\lambda}_1, \bar{\lambda}_2, \ldots, 
		\bar{\lambda}_{mn}$ are eigenvalues of $P_\chi(\lambda)$.
	\end{theorem}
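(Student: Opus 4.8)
The plan is to reduce the assertion to the single-matrix eigenvalue correspondence already recorded in part (i) of Proposition~\ref{Prop-Properties-complex adjoint}, applied to the $mn\times mn$ block companion matrix $C_P$. By definition the standard eigenvalues of $P(\lambda)$ are exactly the standard eigenvalues of $C_P\in M_{mn}(\mathbb{H})$, while the eigenvalues of $P_\chi(\lambda)$, i.e.\ the roots of $\det P_\chi(\lambda)$, coincide with the eigenvalues of its block companion matrix $C_{P_\chi}\in M_{2mn}(\mathbb{C})$; here $P_\chi$ is monic because $\bigchi_{I_n}=I_{2n}$ by part (a). Part (i) of Proposition~\ref{Prop-Properties-complex adjoint}, applied to $C_P$, then says that $\lambda_1,\dots,\lambda_{mn}$ are the standard eigenvalues of $C_P$ if and only if $\lambda_1,\dots,\lambda_{mn},\bar\lambda_1,\dots,\bar\lambda_{mn}$ are the eigenvalues of $\bigchi_{C_P}$. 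Consequently the whole theorem will follow once I prove that the two $2mn\times 2mn$ complex matrices $\bigchi_{C_P}$ and $C_{P_\chi}$ have the same eigenvalues.

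First I would record the purely algebraic reason the two are linked. Regarding $C_P$ as an element of $M_m\!\big(M_n(\mathbb{H})\big)$ and applying $\bigchi$ to each of its $n\times n$ quaternion blocks yields a matrix in $M_m\!\big(M_{2n}(\mathbb{C})\big)$; since $\bigchi$ sends the blocks $0$, $I_n$ and $-A_i$ to $0$, $I_{2n}$ and $-\bigchi_{A_i}$ respectively (parts (a), (c), (d)), this block-entrywise image is precisely the companion matrix $C_{P_\chi}$ of $P_\chi(\lambda)=I_{2n}\lambda^m+\bigchi_{A_{m-1}}\lambda^{m-1}+\cdots+\bigchi_{A_0}$. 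The matrix $\bigchi_{C_P}$, by contrast, is obtained by writing $C_P=C^{(1)}+C^{(2)}\textbf{j}$ as a single element of $M_{mn}(\mathbb{H})$ and forming its complex adjoint $\left[\begin{smallmatrix} C^{(1)} & C^{(2)}\\ -\overline{C^{(2)}} & \overline{C^{(1)}}\end{smallmatrix}\right]$, so that its $2mn$ coordinates are grouped first by the two halves of the adjoint structure and only then by the $m$ companion blocks.

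Second I would show that these two groupings of coordinates differ by a single perfect-shuffle permutation. Indexing a coordinate by a triple $(a,r,i)$, with $a\in\{1,2\}$ labelling the adjoint half, $r\in\{1,\dots,m\}$ the companion block and $i\in\{1,\dots,n\}$ the position inside the block, the ordering used by $\bigchi_{C_P}$ is lexicographic in $(a,r,i)$, whereas the ordering used by $C_{P_\chi}$ is lexicographic in $(r,a,i)$. A block-by-block comparison of the four cases $a,b\in\{1,2\}$ shows that the $\big((a,r,i),(b,s,j)\big)$ entry of $\bigchi_{C_P}$ equals the $\big((r,a,i),(s,b,j)\big)$ entry of $C_{P_\chi}$, so the permutation matrix $\Pi$ that interchanges the $a$- and $r$-indices satisfies $\Pi\,\bigchi_{C_P}\,\Pi^{-1}=C_{P_\chi}$. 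Hence $\bigchi_{C_P}$ and $C_{P_\chi}$ would be similar and share their eigenvalues, completing the argument. The main obstacle is exactly this second step: one must verify, carefully and uniformly across the four index blocks, that swapping the adjoint-half index with the companion-block index carries every entry of $\bigchi_{C_P}$ to the matching entry of $C_{P_\chi}$ with the correct sign and conjugation. Once this perfect-shuffle identity is in hand, the remainder is a direct appeal to Proposition~\ref{Prop-Properties-complex adjoint}.
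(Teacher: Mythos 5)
Your proposal is correct and follows essentially the same route as the paper: both reduce the statement to part (i) of Proposition \ref{Prop-Properties-complex adjoint} applied to $C_P$, and both establish that $\bigchi_{C_P}$ and $C_{P_\chi}$ are similar via exactly the perfect-shuffle permutation you describe (the paper writes it explicitly as $P = E_{11} + E_{23} + \cdots + E_{(2m)(2m)}$ and likewise leaves the entrywise verification to the reader). The only cosmetic difference is that you spell out the index bookkeeping $(a,r,i)\leftrightarrow(r,a,i)$ more explicitly than the paper does.
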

	
	\begin{proof}	
		Consider $C_P = \begin{bmatrix}
			0 & I & 0 & \cdots & 0 \\
			\vdots & \vdots & \vdots & \ddots & \vdots \\
			0 & 0 & 0 & \cdots & I\\
			-A_0 & -A_1 & -A_2 & \cdots & -A_{m-1}
		\end{bmatrix}$, the block companion matrix corresponding to $P(\lambda)$. Let 
		$A_i = A_{i1}+A_{i2} j$, where $A_{i1}, A_{i2} \in M_n(\mathbb{C})$ for 
		$i=0,1, \dots, m-1$. Then 
		\begin{center}
			$	C_P = 
			\begin{bmatrix}
				0 & I & 0 & \cdots & 0 \\
				\vdots  & \vdots & \vdots & \ddots & \vdots \\
				0 & 0 & 0 & \cdots & I\\
				-A_{01} & -A_{11}  & -A_{21} & \cdots & -A_{(m-1)1}
			\end{bmatrix}$ + $\begin{bmatrix}
				0 & 0 & 0 & \cdots & 0 \\
				\vdots & \vdots & \vdots & \ddots & \vdots \\
				0 & 0 & 0 & \cdots & 0 \\
				-A_{02} & -A_{12}  & -A_{22} & \cdots & -A_{(m-1)2}
			\end{bmatrix}j $
		\end{center}
		$	= C_{P_1}+C_{P_2}j$. 
		Therefore the complex adjoint 
		matrix of $C_P$ is 
		$\bigchi_{C_P} = 
		\begin{bmatrix}
			C_{P_1} & C_{P_2}\\
			-\bar{C}_{P_2} & \bar{C}_{P_1}
		\end{bmatrix}$. The corresponding block companion matrix of $P_\chi(\lambda)$ is 
		\begin{center}
			$C_{P_\chi} 
			= \begin{bmatrix}
				0 & I & 0 & \cdots & 0 \\
				\vdots & \vdots & \vdots & \ddots & \vdots \\
				0 & 0 & 0 & \cdots & I\\
				-\bigchi_{A_0} & -\bigchi_{A_1} & -\bigchi_{A_2} & \cdots & -\bigchi_{A_{m-1}}
			\end{bmatrix}$. 
		\end{center}
		It is easy to verify that $\bigchi_{C_P} = PC_{P_\chi}P^{-1}$, where 
		$P$ is the permutation matrix given by 
		$P = E_{11} + E_{23} + E_{35} + \cdots + E_{m(2m-1)} + E_{(m+1)2} + E_{(m+2)4} + 
		\cdots + E_{(2m)(2m)}$, where $E_{ij}$ is a block matrix of size $2mn \times 2mn$ 
		with each block of size $n \times n$ such that the $ij^{th}$ block is $I_n$ 
		(the identity matrix of size $n$) and the remaining blocks being 
		zeros for $1 \leq i,j \leq 2m$. The proof now follows from Proposition 
		\ref{Prop-Properties-complex adjoint} $(i)$.
	\end{proof}
	
	We have the following result as a consequence of the above theorem.
	
	\begin{theorem}\label{Thm-eigenvalue location}
		Let $P(\lambda)$ be a quaternion matrix polynomial all of whose coefficients are 
		unitary matrices and let $\lambda_0$ be one of its right eigenvalues. 
		Then $\frac{1}{2}<|\lambda_0|<2$.
	\end{theorem}
	
	\begin{proof}
		Since the coefficients of $P(\lambda)$ are quaternion unitary matrices, the coefficients 
		of its complex adjoint matrix polynomial, $P_{\chi}(\lambda)$ are complex unitary matrices. 
		Therefore by Theorem $3.2$ of \cite{Cameron}, if $\mu_0$ is an eigenvalue of 
		$P_{\chi}(\lambda)$, then $\frac{1}{2} < |\mu_0| < 2$. Now the result follows from the 
		previous theorem and Corollary $6.1$ from \cite{Zhang}. 	
	\end{proof}
	
	We now examine diagonalizability of the block companion matrices of certain classes of 
	quaternion matrix polynomials. We begin with linear quaternion matrix polynomials. 
	
	\begin{theorem}\label{Thm-linear case}
		Let $P(\lambda)$ be a linear quaternion matrix polynomial whose coefficients are either 
		$(a)$ unitary matrices or $(b)$ diagonal matrices or $(c)$ positive definite matrices. 
		Then the corresponding block companion matrix $C_P$ of $P(\lambda)$ is diagonalizable. 
	\end{theorem}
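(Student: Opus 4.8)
The plan is to first collapse the statement to a single $n \times n$ matrix. Since $P(\lambda) = A_1\lambda + A_0$ with $A_1$ nonsingular by our standing assumption, the associated monic polynomial is $P_U(\lambda) = I\lambda + A_0 A_1^{-1}$, and for degree $m = 1$ the block companion matrix degenerates to the single block $C_P = -A_0 A_1^{-1} \in M_n(\mathbb{H})$. Hence in every case it suffices to show that $-A_0 A_1^{-1}$ is diagonalizable. The workhorse throughout will be the observation that a normal quaternion matrix is diagonalizable: if $N$ is normal then $\bigchi_N$ is normal by statement (g) of Proposition \ref{Prop-Properties-complex adjoint}, so $\bigchi_N$ is diagonalizable over $\mathbb{C}$, and applying (g) once more shows $N$ is diagonalizable.

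For case (a), I would use that $A_1^{-1} = A_1^*$ is unitary when $A_1$ is, so that $C_P = -A_0 A_1^{-1}$ is a product of unitary matrices (the scalar $-I$ being unitary as well); thus $C_P$ is unitary, hence normal, and the observation above applies. For case (b), the inverse and product of diagonal quaternion matrices are again diagonal, so $C_P$ is diagonal, and diagonalizability follows at once by conjugating each diagonal entry $d_i$ with a nonzero quaternion $c_i$ chosen so that $c_i^{-1} d_i c_i$ is the standard (complex, nonnegative imaginary part) representative of $d_i$, taking $S = \text{diag}(c_1, \ldots, c_n)$.

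The substantive case is (c). Here nonsingularity of the leading coefficient upgrades $A_1$ from positive semidefinite to positive definite, so $A_1$ admits a positive definite Hermitian square root $A_1^{1/2}$, arising from the unitary diagonalization of the Hermitian matrix $A_1$ whose standard eigenvalues are positive reals. Conjugating by $S = A_1^{1/2}$ yields
\begin{equation*}
S^{-1}(A_0 A_1^{-1})S = A_1^{-1/2} A_0 A_1^{-1/2},
\end{equation*}
and the right-hand side is Hermitian, since both $A_0$ and $A_1^{-1/2}$ are. A Hermitian matrix is normal, hence diagonalizable; as diagonalizability is preserved under similarity (conjugating the diagonalizing transformation), $A_0 A_1^{-1}$ and therefore $C_P = -A_0 A_1^{-1}$ are diagonalizable.

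I expect case (c) to be the main obstacle, specifically the need to justify the existence of a positive definite Hermitian square root $A_1^{1/2}$ in the noncommutative quaternionic setting and to confirm that similarity preserves diagonalizability in the sense of standard eigenvalues used here. Both follow cleanly from the complex adjoint correspondence and the spectral theorem for Hermitian quaternion matrices, but they are the points requiring care, whereas cases (a) and (b) reduce to routine verifications once the normal-implies-diagonalizable principle is in place.
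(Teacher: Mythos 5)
Your proof is correct, but for the substantive case (c) it takes a genuinely different route from the paper. The paper handles (c) by passing to the complex adjoint matrix polynomial $P_\chi(\lambda) = \bigchi_{A_1}\lambda + \bigchi_{A_0}$, observing that $\bigchi_{A_1}$ and $\bigchi_{A_0}$ are positive (semi)definite complex matrices, invoking part (3) of Theorem 2.1 of \cite{Pallavi-Hadimani-Jayaraman} to conclude that $C_{P_\chi}$ is diagonalizable, and then pulling the conclusion back to $C_P$ via Proposition \ref{Prop-Properties-complex adjoint}; cases (a) and (b) are dismissed as trivial. You instead work entirely inside $M_n(\mathbb{H})$: you conjugate $A_0A_1^{-1}$ by the Hermitian positive definite square root $A_1^{1/2}$ to exhibit a Hermitian (hence normal, hence diagonalizable) matrix similar to $C_P$. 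This is essentially the complex-case argument of the cited theorem transplanted to the quaternionic setting, so nothing is lost; what you gain is a self-contained proof that does not lean on the external reference, at the cost of having to verify the quaternionic spectral theorem for Hermitian matrices and the existence of the square root (both standard, and available from Zhang's survey). Your explicit treatments of (a) (product of unitaries is unitary, hence normal) and (b) (conjugate each diagonal entry to its standard complex representative) correctly fill in what the paper calls trivial, and your reduction of the degree-one block companion matrix to the single block $-A_0A_1^{-1}$ matches the paper's definition $B_0 = A_0A_1^{-1}$.
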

	
	\begin{proof}
		Let $P(\lambda) = A_1\lambda+A_0$. If the coefficients of $P(\lambda)$ are either 
		unitary or diagonal matrices then the result follows trivially. Suppose $A_1$ and $A_0$ 
		are positive definite matrices. Consider the corresponding complex adjoint matrix 
		polynomial $P_{\chi}(\lambda) = \bigchi_{A_1}\lambda+\bigchi_{A_0}$. Observe that 
		$\bigchi_{A_1}$ and $\bigchi_{A_0}$ are positive definite complex matrices. 
		Therefore by part $(3)$ of Theorem $2.2$ of \cite{Pallavi-Hadimani-Jayaraman}, the 
		block companion matrix $C_{P_{\chi}} = - \bigchi_{A_1}^{-1}\bigchi_{A_0}$ is 
		diagonalizable. Since $\bigchi_{C_P}$ is similar to $C_{P_{\chi}}$, it follows that 
		$\bigchi_{C_P}$ is diagonalizable. Hence it follows from  Proposition 
		\ref{Prop-Properties-complex adjoint} $(g)$ that $C_P= - A_1^{-1}A_0$ is diagonalizable.
	\end{proof}

	\begin{remark}\label{Rem-counter example-linear}
		When the coefficients of a linear quaternion matrix polynomial $P(\lambda)$ are either 
		normal matrices or upper(lower) triangular matrices, the corresponding block 
		companion matrix $C_P$ is not diagonalizable in general. The examples given in Remark $2.3 (1)$ 
		of \cite{Pallavi-Hadimani-Jayaraman} along with 
		Proposition \ref{Prop-Properties-complex adjoint} $(j)$ serve the purpose.
	\end{remark}

	We now consider quadratic quaternion matrix polynomials.
	
	\begin{theorem}\label{Thm-diagonalization-block companion matrix}
		Let $P(\lambda)=I\lambda^2 + U_1 \lambda + U_0$ be an $n\times n$ quaternion matrix polynomial
		where the coefficients $U_0$ and $U_1$ are commuting unitary matrices. Then the
		corresponding block companion matrix $C_P$ of $P(\lambda)$ is diagonalizable.
	\end{theorem}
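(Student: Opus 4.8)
The plan is to push everything through the complex adjoint construction and then reduce to an explicit scalar computation. By statement (g) of Proposition \ref{Prop-Properties-complex adjoint}, $C_P$ is diagonalizable over $\mathbb{H}$ if and only if $\bigchi_{C_P}$ is diagonalizable over $\mathbb{C}$. By the proof of Theorem \ref{Thm-eigenvalue relation}, $\bigchi_{C_P}$ is similar (via a permutation matrix) to the block companion matrix $C_{P_\chi}$ of the complex adjoint matrix polynomial $P_\chi(\lambda) = I_{2n}\lambda^2 + \bigchi_{U_1}\lambda + \bigchi_{U_0}$; since diagonalizability is preserved under similarity, it suffices to show that $C_{P_\chi} = \begin{bmatrix} 0 & I_{2n} \\ -\bigchi_{U_0} & -\bigchi_{U_1} \end{bmatrix}$ is diagonalizable over $\mathbb{C}$. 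By statements (g) and (h) of the same proposition, $\bigchi_{U_0}$ and $\bigchi_{U_1}$ are commuting unitary complex matrices.

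Next I would exploit the fact that commuting unitary (hence normal) matrices are simultaneously unitarily diagonalizable: there is a unitary $W$ with $W^*\bigchi_{U_0}W = \text{diag}(a_1,\dots,a_{2n})$ and $W^*\bigchi_{U_1}W = \text{diag}(b_1,\dots,b_{2n})$, where, because $\bigchi_{U_0}$ and $\bigchi_{U_1}$ are unitary, every $a_\ell$ and every $b_\ell$ lies on the unit circle. Conjugating $C_{P_\chi}$ by $\text{diag}(W,W)$ turns both nonzero blocks into diagonal matrices, and a suitable permutation similarity then decomposes the resulting $4n \times 4n$ matrix into a direct sum of the $2 \times 2$ scalar companion matrices $\begin{bmatrix} 0 & 1 \\ -a_\ell & -b_\ell \end{bmatrix}$ for $\ell = 1, \dots, 2n$, each associated with the scalar quadratic $\lambda^2 + b_\ell\lambda + a_\ell$.

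Finally I would show that each such $2 \times 2$ block is diagonalizable. A companion matrix is non-derogatory (its minimal polynomial equals its characteristic polynomial), so this block fails to be diagonalizable precisely when $\lambda^2 + b_\ell\lambda + a_\ell$ has a repeated root, that is, when the discriminant $b_\ell^2 - 4a_\ell$ vanishes. This is exactly where unimodularity does the work: since $|b_\ell^2| = 1$ while $|4a_\ell| = 4$, we always have $b_\ell^2 \neq 4a_\ell$, so the discriminant is nonzero and each block has two distinct eigenvalues and is diagonalizable. A direct sum of diagonalizable matrices is diagonalizable, so $C_{P_\chi}$ is diagonalizable, and unwinding the reductions gives the claim for $C_P$.

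I expect the only genuinely delicate point to be this last one, namely recognizing that the unit-modulus constraint on the eigenvalues of $U_0$ and $U_1$ automatically forces every scalar block to have a nonzero discriminant; the commutativity hypothesis is what makes the simultaneous diagonalization available, and without it the off-diagonal blocks cannot be made diagonal and the clean decomposition breaks down. Everything else is a routine passage through $\bigchi$ together with the standard structure of block companion matrices. (A shortcut would be to invoke directly a complex analogue for commuting unitary coefficients from \cite{Pallavi-Hadimani-Jayaraman}, but the self-contained computation above is short enough to carry out in full.)
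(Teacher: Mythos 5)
Your proposal is correct and follows the same overall route as the paper: pass to the complex adjoint matrix polynomial, note that $\bigchi_{U_0}$ and $\bigchi_{U_1}$ are commuting complex unitary matrices, establish that $C_{P_\chi}$ is diagonalizable, and pull the conclusion back through the similarity $\bigchi_{C_P} \sim C_{P_\chi}$ and statement (g) of Proposition \ref{Prop-Properties-complex adjoint}. The only difference is at the complex-case step: the paper simply cites Theorem $2.4$ of \cite{Pallavi-Hadimani-Jayaraman}, whereas you prove that ingredient from scratch via simultaneous unitary diagonalization, reduction to $2\times 2$ scalar companion blocks, and the observation that the discriminant $b_\ell^2-4a_\ell$ cannot vanish because $|b_\ell^2|=1\neq 4=|4a_\ell|$. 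That computation is sound (each block is non-derogatory with distinct eigenvalues, hence diagonalizable, and a direct sum of diagonalizable matrices is diagonalizable), so your version buys self-containedness at the cost of a little length; it is essentially the content of the cited result made explicit.
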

	
	\begin{proof}
		Let $\bigchi_{U_1}$ and $\bigchi_{U_0}$ be the complex adjoint matrices of $U_1$ and 
		$U_0$ respectively. Consider the corresponding complex adjoint matrix polynomial of 
		$P(\lambda)$, given by $P_{\chi}(\lambda) = I\lambda^2+\bigchi_{U_1}\lambda+\bigchi_{U_0}$ 
		with the block companion matrix $C_{P_\chi}$. Since $U_1$ and $U_0$ are commuting unitary 
		matrices, the matrices $\bigchi_{U_1}$ and $\bigchi_{U_0}$ are commuting complex unitary 
		matrices. Then by Theorem $2.4$ of \cite{Pallavi-Hadimani-Jayaraman}, $C_{P_\chi}$ is 
		diagonalizable. Since $\bigchi_{C_P}$ and $C_{P_\chi}$ are similar, $\bigchi_{C_P}$ is 
		diagonalizable. Proposition \ref{Prop-Properties-complex adjoint} $(g)$ then implies that 
		$C_P$ is diagonalizable.
	\end{proof}

	\begin{remark}\label{Rem-counter examples-unitary}
		\
		\begin{enumerate}
			\item Note that if either the commutativity condition is removed or if 
			$\text{deg} \ P(\lambda) > 2$, then Theorem \ref{Thm-diagonalization-block companion matrix} 
			is not true in general. The examples given in Remark $2.5$ of \cite{Pallavi-Hadimani-Jayaraman} 
			illustrate this.
			
			\item The term $\kappa(X)=||X||_2||X^{-1}||_2$, that appears in the general form of the Hoffman-Wielandt
			inequality, is the spectral condition number of a square matrix $X$.
			In Theorem \ref{Thm-diagonalization-block companion matrix}, though  we know that 
			$\bigchi_{C_P}$ is diagonalizable through a matrix $X$ for which $\kappa(X) <2$ (see Section 
			$2.4.1$ of \cite{Pallavi-Hadimani-Jayaraman} for details), we do not know the matrix which 
			diagonalizes $C_P$. Similarly, in part (c) of Theorem \ref{Thm-linear case}, the matrix 
			$\bigchi_{C_P}$ is diagonalizable through a positive definite matrix, but we do not 
			know the matrix which diagonalizes the block companion matrix $C_P$. Hence it is difficult 
			to estimate the condition number for these matrices. The block companion matrices, in parts 
			(a) and (b) of Theorem \ref{Thm-linear case} are diagonalizable
			through a unitary matrix and identity matrix respectively. The condition number
			of these matrices is $1$.  
		\end{enumerate}
		
	\end{remark}
	
	We end this paper with the proof of the general form of the Hoffman-Wielandt inequality for block 
	companion matrices of quaternion matrix polynomials.
	
	\begin{theorem}\label{Thm-H-W inequality-matrix polynomials}
		Let $P(\lambda)$ and $Q(\lambda)$ be quadratic quaternion matrix polynomials of the same 
		size. Let $C_P$ and $C_Q$ be the corresponding block companion matrices. If the coefficients 
		of $P(\lambda)$ are commuting unitary matrices and $Q(\lambda)$ satisfies the condition of Lemma 
		\ref{Lem-normal condition} (ii), then there exists a permutation $\pi$ of the 
		indices $1, 2, \ldots, 2n$ such that 
		\begin{center}
			$\displaystyle \sum_{i=1}^{2n} |\lambda_i - \mu_{\pi(i)}|^2 
			\leq ||X||^2_2 ||X^{-1}||^2_2 ||C_P - C_Q||^2_F$
		\end{center}
		where, $\{\lambda_i\}$ and $\{\mu_i\}$ are 
		the standard eigenvalues of $C_P$ and $C_Q$ respectively, and $X$ is a nonsingular matrix.
	\end{theorem}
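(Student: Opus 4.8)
The plan is to reduce the statement to a direct application of the quaternion Hoffman-Wielandt type inequality (Theorem \ref{Thm-H-W type inequality_quaternions}), once the diagonalizability of $C_P$ has been secured. Since $P(\lambda)$ is quadratic with commuting unitary coefficients, I am in the setting of Theorem \ref{Thm-diagonalization-block companion matrix}, which guarantees that the block companion matrix $C_P \in M_{2n}(\mathbb{H})$ is diagonalizable. Consequently there is a nonsingular $X \in M_{2n}(\mathbb{H})$ with $X^{-1} C_P X = \text{diag}(\lambda_1, \ldots, \lambda_{2n})$, where $\lambda_1, \ldots, \lambda_{2n}$ are the standard eigenvalues of $C_P$, which by the definition in Section \ref{sec-2} are precisely the standard eigenvalues of $P(\lambda)$.

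First I would observe that, because $Q(\lambda)$ is an arbitrary quadratic quaternion matrix polynomial of the same size, its block companion matrix $C_Q$ is simply an arbitrary element of $M_{2n}(\mathbb{H})$, whose standard eigenvalues are $\mu_1, \ldots, \mu_{2n}$. Thus the pair $(C_P, C_Q)$ fits exactly the hypotheses of Theorem \ref{Thm-H-W type inequality_quaternions} applied to matrices of order $2n$: here $C_P$ plays the role of the diagonalizable matrix, $C_Q$ the role of the arbitrary matrix, and $X$ the role of the diagonalizing matrix. Invoking that theorem then produces a permutation $\pi$ of $\{1, 2, \ldots, 2n\}$ with $\displaystyle \sum_{i=1}^{2n} |\lambda_i - \mu_{\pi(i)}|^2 \leq ||X||^2_2 \, ||X^{-1}||^2_2 \, ||C_P - C_Q||^2_F$, which is the desired conclusion.

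The hard part has, in effect, already been carried out in the earlier results: the diagonalizability supplied by Theorem \ref{Thm-diagonalization-block companion matrix} is what allows the type inequality to be invoked at all, and the counterexamples preceding the statement show that some such structural hypothesis on $P(\lambda)$ is genuinely necessary. The only remaining point requiring care is to confirm that the "standard eigenvalues of $P$ and $Q$" appearing in the statement coincide with the standard eigenvalues of $C_P$ and $C_Q$ to which the quaternion Hoffman-Wielandt type theorem refers; this is immediate from the definition of the standard eigenvalues of a matrix polynomial. I would also remark, in the spirit of Remark \ref{Rem-counter examples-unitary}(2), that although $\bigchi_{C_P}$ is diagonalizable with spectral condition number less than $2$, no explicit control is available on $\kappa(X)$ for the quaternion diagonalizer $X$ itself, so the factor $||X||^2_2 \, ||X^{-1}||^2_2$ cannot in general be replaced by an explicit numerical constant.
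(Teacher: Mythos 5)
Your proposal is correct and follows exactly the paper's own argument: diagonalizability of $C_P$ from Theorem \ref{Thm-diagonalization-block companion matrix}, then a direct application of Theorem \ref{Thm-H-W type inequality_quaternions} with $C_P$ as the diagonalizable matrix and $C_Q$ as the arbitrary one. The additional remarks you make (identification of the polynomial's standard eigenvalues with those of its companion matrix, and the lack of explicit control on $\kappa(X)$) are consistent with the paper's definitions and with Remark \ref{Rem-counter examples-unitary}.
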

	
	\begin{proof}
		By Theorem \ref{Thm-diagonalization-block companion matrix} the block companion matrix $C_P$ 
		is diagonalizable and by Lemma \ref{Lem-normal condition} (ii), the matrix $C_Q$ is normal. The result then 
		follows from Theorem \ref{Thm-H-W type inequality_quaternions}.
	\end{proof}

	\begin{remark}\label{General-HW-Linear-Matrix-Poly}
		In a similar line, one can prove the general form of the Hoffman-Wielandt inequality for the block 
		companion matrices of linear quaternion matrix polynomials. If $P(\lambda)$ and $Q(\lambda)$ are linear 
		matrix polynomials of same size, where $P(\lambda)$ satisfies either of the conditions in Theorem 
		\ref{Thm-linear case} and $Q(\lambda)$ satisfies the condition (i) of Lemma \ref{Lem-normal condition}, 
		then the general form of Hoffman-Wielandt inequality follows.
	\end{remark}

	\section{Concluding Remarks}
	
	We have derived the Hoffman-Wielandt inequality and its generalization for 
	quaternion matrices. Under certain assumptions on the coefficients, diagonalizability 
	of block companion matrices of quaternion matrix polynomials is proved. Similarly, a characterization 
	to determine when the block companion matrix of a quaternion matrix polynomial is normal is given. 
	As a consequence, a generalization of the Hoffman-Wielandt inequality for the corresponding 
	block companion matrices of such quaternion matrix polynomials is derived. In addition, bound for 
	the right eigenvalues of quaternion matrix polynomials whose coefficient matrices are unitary is 
	given. The results presented in this paper lead to some interesting questions/problems, one of which has already been pointed out in Remark $\ref{Rem-counter examples-unitary}(2)$. Another interesting 
	question concerns the spectral variation involving left eigenvalues of quaternion matrices and quaternion matrix polynomials; the paper by Huang and So \cite{Huang-So} contains some interesting results on finding/computing left eigenvalues of quaternion matrices.



	%
	%
	%
	%
	
\end{document}